\newcommand{\vl}{\models}
\newcommand{\yy}{\rightarrow}
\newcommand{\power}{\mathcal{P}}
\newcommand{\nsystem}{\mathcal{V}}
\newcommand{\iand}{\bigwedge}
\newcommand{\ior}{\bigvee}
\newcommand{\ua}{\mbox{$\uparrow$}}
\newcommand{\thn}{\ \Rightarrow\ }
\newcommand{\eq}{\ \Leftrightarrow\ }
\newcommand{\gm}{\Gamma}
\newcommand{\logic}[1]{\mathsf{#1}}
\newcommand{\plogic}[1]{\mathsf{#1}_{\ast}}
\newcommand{\ilogic}[1]{\mathsf{#1}_{\omega_{1}}}
\newcommand{\formula}[1]{\mathrm{#1}}
\newcommand{\iformula}[1]{\mathrm{#1}_{\omega_{1}}}
\newcommand{\predd}{\formula{CD}}
\newcommand{\predb}{\formula{BF}}
\newcommand{\infb}{\iformula{BF}}
\newcommand{\domain}{\forall x(\phi\lor q)\supset(\forall x\phi)\lor q}
\newcommand{\barcan}{\forall x\Box\phi\supset\Box\forall x\phi}
\newcommand{\ibarcan}{\iand_{i\in\omega}\Box p_{i}\supset\Box\iand_{i\in\omega} p_{i}}
\newcommand{\qflt}[1]{\mathcal{QF}_{#1}}
\newcommand{\nec}{\Box}
\newcommand{\nmodel}[1]{\mathcal{#1}}
\newcommand{\nval}[1]{v_{#1}}
\newcommand{\nass}{\mathcal{A}}
\newcommand{\nint}{\mathcal{I}}
\newcommand{\ndom}{\mathcal{D}}
\newcommand{\lcong}[1]{\cong_{\logic{#1}}}
\newcommand{\lind}[1]{A_{\logic{#1}}}
\newcommand{\variable}{\mathsf{V}}
\newcommand{\propvar}{\mathsf{Prop}}
\newcommand{\sformula}{\mathsf{Sub}}
\numberwithin{equation}{section}
\theoremstyle{plain}
\newtheorem{theorem}{Theorem}[section]
\newtheorem{lemma}[theorem]{Lemma}
\newtheorem{corollary}[theorem]{Corollary}
\theoremstyle{definition}
\newtheorem{definition}[theorem]{Definition}
\date{}
\begin{document}
\title[An extension of J\'{o}nsson-Tarski representation]
{An extension of J\'{o}nsson-Tarski representation and 
model existence in predicate non-normal modal logics}
\author{Yoshihito Tanaka}{Kyushu Sangyo University}
\keywords{J\'{o}nsson-Tarski representation, Neighborhood model, Modal logic, 
Predicate logic, Infinitary logic}

\maketitle

\begin{abstract}
In this paper, we give an extension of the J\'{o}nsson-Tarski representation 
theorem
for both normal and non-normal modal algebras so that it preserves 
countably many infinitary meets and joins. 
To extend the J\'{o}nsson-Tarski representation to 
non-normal modal algebras we consider neighborhood frames
instead of 
Kripke frames just as Do\v{s}en's duality theorem for 
modal algebras, and to deal with infinite meets and joins, we make use of 
Q-filters instead of prime filters. 
Then, we show that every predicate modal logic, whether it is 
normal or non-normal, has a model defined on a neighborhood frame with 
constant domains, and give completeness theorem for some predicate modal logics. 
We also show the same results for infinitary modal logics. 
\end{abstract}

\section{Introduction}

In this paper, we give an extension of the J\'{o}nsson-Tarski representation 
theorem
for both normal and non-normal modal algebras so that it preserves 
countably many infinitary meets and joins. 
To extend the J\'{o}nsson-Tarski representation to 
non-normal modal algebras we consider neighborhood frames
instead of 
Kripke frames just as Do\v{s}en's duality theorem for 
modal algebras \cite{dsn89}, and to deal with infinite meets and joins, we make use of 
Q-filters instead of prime filters. 
Then, we show a model existence theorem for every 
predicate modal logic and completeness theorem for some predicate modal logics. 
We also show the same results for infinitary modal logics. 
Since we deal with non-normal modal algebras, we call  
a Boolean algebra with unary operator $\nec$ 
by {\em modal algebra}, as does Do\v{s}en in \cite{dsn89}, and call a
modal algebra which satisfies 
$\nec 1=1$
and 
$\nec(x\land y)=\nec x\land \nec y$
by {\em normal modal algebra}. 
We define {\em modal logics} and {\em normal modal logics} in the same manner. 
That is, the class of modal logics in this paper includes both normal and 
non-normal modal logics.

It is well known as the 
J\'{o}nsson-Tarski representation \cite{jns-trs51} 
that for any normal modal algebra $A$, 
there exists a monomorphism
from $A$ to the dual algebra of the Kripke frame defined by the set of 
all prime filters of $A$ and 
the binary relation obtained from the modal operator of $A$. 
The J\'{o}nsson-Tarski representation is a strong tool in the researches
of propositional normal modal logics \cite{chg-zkh97,blc-rjk-vnm01}, 
but the naive application of these researches 
to predicate modal logics does
not work, as the embedding does not 
preserve infinite meets and joins.

In the case of Boolean algebras,  
Rasiowa-Sikorski \cite{rsw-skr50,rsw-skr63}
introduced the notion of {\em Q-filters} 
to strengthen the notion of prime filters 
and 
showed the {\em Rasiowa-Sikorski lemma}, which states that
two distinct points in a Boolean algebra 
can be separated by a Q-filter. 
From the Rasiowa-Sikorski lemma, it follows that 
for any Boolean algebra $A$ and 
any countable subset $S$ of $\power(A)$, there exists 
an embedding $f$ from $A$
into the power set algebra of the set of all Q-filters of $A$,  
which satisfies that $f(\iand X)=\iand f[X]$ and 
$f(\ior X)=\ior f[X]$ for all $X\in S$. 
Applying this result to the Lindenbaum algebra, 
an algebraic proof of the completeness theorem for the classical 
predicate logic is obtained \cite{rsw-skr50,rsw-skr63}.  
In the case of  Heyting algebras, 
Rauszer-Sabarski \cite{rsz-sbl74} showed 
that for any Heyting algebra $A$ and any countable subset 
$S\subseteq \power(A)$ which 
satisfies certain conditions, 
there exists an embedding $f$ 
from $A$ into 
the dual Heyting algebra of the intuitionistic Kripke frame 
defined on the set of all Q-filters of $A$,  
which preserves all infinite meets and joins in $S$. 
One of the conditions for the existence of the embedding is 
\begin{equation}\label{rsinfd}
\iand_{x\in X}(x\lor y)=\iand X\lor y
\end{equation}
for any $X\in S$ and $y\in A$. 
In fact, Rauszer-Sabarski \cite{rsz-sbl75bsl} proved that 
\eqref{rsinfd} is necessary for the Rasiowa-Sikorski lemma for Heyting algebras.  
When we interpret universal quantifiers in a formula as 
an infinite meets in Heyting algebras, 
\eqref{rsinfd} corresponds to the axiom 
$$
\predd=\domain
$$
of constant domains. 
By using the Rasiowa-Sikorski lemma for Heyting algebras given in \cite{rsz-sbl75bsl}, 
Ono-Rauszer 
\cite{ono-rsz82} showed that if 
$\predd$ is provable in 
a superintuitionistic logic $\logic{L}$,  
there exists an intuitionistic Kripke model $\nmodel{M}$ with constant domains
which satisfies that 
$\phi\in\logic{L}$ iff $\nmodel{M}\vl\phi$ for any formula $\phi$.

Then, 
Tanaka-Ono \cite{tnk-ono98} showed an extension of 
the J\'{o}nsson-Tarski representation for normal modal algebras 
which preserves countably many infinitary meets and joins:
For any modal algebra $A$ and any countable subset $S\subseteq\power(A)$ 
which satisfies 
\begin{equation}\label{rsinfb}
\nec\iand X=\iand_{x\in X}\nec x
\end{equation}
for any $X\in S$
and some other conditions, 
there exists an embedding 
from $A$ to the dual algebra of the Kripke 
frame defined by the set of all Q-filters of $A$, 
which preserves all infinite meets and joins in $S$. 
The equation \eqref{rsinfb} corresponds to the Barcan formula 
$$
\predb=\barcan  
$$
and its infinitary variant 
$$
\infb=\ibarcan. 
$$
It is shown in \cite{tnk-ono98} that 
for any universal propositional normal modal logic $\logic{L}$, 
the predicate modal logic defined by 
the least predicate extension of $\logic{L}$ and $\predb$ 
is complete with respect to the class $C$ of Kripke frames with constant domains, 
where $C$ is the class of Kripke frames which validate $\logic{L}$. 
It is also shown in \cite{tnk-ono98} that the same 
result as predicate modal logics and $\predb$ 
holds for infinitary modal logics and $\infb$.

In this paper, we introduce a similar extension 
of the J\'{o}nsson-Tarski representation for both 
normal and non-normal modal algebras, without assuming 
\eqref{rsinfb}. 
It is proved by Do\v{s}en \cite{dsn89} that the category 
of modal algebras is dually equivalent to the category 
of neighborhood frames,  
and it is known that there exists a neighborhood frames which refutes 
\eqref{rsinfb} (see, \cite{arlcst-pct06,mnr16}), while 
every dual algebra of a Kripke frame validates it. 
Therefore, we consider dual algebras of neighborhood frames defined 
on the set of Q-filters. 
Then we show a model existence theorem for predicate modal logics:  
For any
predicate modal logic $\logic{L}$, 
whether it is normal or not, 
there exists a neighborhood model $\nmodel{M}$ with constant domains 
which satisfies that $\phi\in\logic{L}$ iff $\nmodel{M}\vl\phi$ for any formula 
$\phi$. 
We also show that the least predicate modal logics which are axiomatized by some of the axioms
$$
\nec\top,\  
\nec(p\land q)\supset\nec p\land \nec q,\ 
\nec p\land\nec q\supset\nec(p\land q)
$$
are complete with respect to the corresponding classes of neighborhood frames with 
constant domains. 
The completeness of these logics are already given by
Arl\'{o}-Costa and Pauit in \cite{arlcst-pct06}, but we give another 
algebraic proof. 
As is shown in \cite{arlcst-pct06}, the Barcan formula is not valid in the 
class of neighborhood frames with constant domains. 
Indeed, the least predicate extension of $\logic{K}$, 
in which $\predb$ is not provable, is complete with 
respect to a class of neighborhood frames with constant domains. 
See \cite{arlcst02} for more about the relation between the Barcan formula and neighborhood frames. 
We also apply the representation theorem to infinitary modal logics, and 
show the model existence theorem for all infinitary modal logics. 
As a corollary, we give completeness theorem for some infinitary modal logics, 
including the least infinitary extension $\ilogic{K}$ of $\logic{K}$. 
The completeness theorem of $\ilogic{K}$ is already given by
Minari in \cite{mnr16}, 
but we extend it and give the same algebraic proof as predicate modal logics.

The outline of the paper is as follows: 
In Section~\ref{sec:pre}, we recall basic definitions. 
In Section~\ref{sec:rs}, we present an extension of 
the J\'{o}nsson-Tarski representation. 
In Section~\ref{sec:mex}, we show model existence theorem 
and completeness theorem for monotonic predicate modal logics. 
In Section~\ref{sec:non-monotonic} and Section~\ref{sec:infinitary},
we discuss non-monotonic predicate modal logics and 
infinitary modal logics, respectively.

\section{Preliminaries}\label{sec:pre}

In this section, we recall basic definitions.

Let $\langle W,\leq\rangle$ be a partially ordered set.  
For any $X\subseteq W$, 
we write 
$\ua X$ for the upward closure of $X$. That is, 
\[
\ua X=\{w\in W\mid \exists x\in X(x\leq w)\}. 
\]
Let $f\colon A\yy B$ be a mapping from a set $A$ to a set $B$. 
For any set $X\subseteq A$ and $Y\subseteq B$, 
$f\left[X\right]$ and $f^{-1}\left[Y\right]$ denote the sets
\[
f\left[X\right]
=
\{f(x)\mid x\in X\},\ \ 
f^{-1}\left[Y\right]
=
\{x\in X\mid f(x)\in Y\},  
\]
respectively.

\begin{definition}
A {\em neighborhood frame} is a pair
$\langle C, \nsystem\rangle$, where 
$C$ is a non-empty set and 
$\nsystem$ is a mapping from $C$ to $\power(\power(C))$. 
A neighborhood frame 
$\langle C, \nsystem\rangle$ is said to be 
{\em monotonic}, 
{\em topped} and  
{\em closed under finite intersections} 
(cufi, for short), if it satisfies the following conditions 
respectively: 
\begin{description}
\item[Monotonic]
for any $c\in C$, $\ua\nsystem(c)=\nsystem(c)$;
\item[Topped]
for any $c\in C$, $\nsystem(c)$ includes $C$; 
\item[Cufi]
for any $c\in C$ and any non-empty finite set $S$, if 
$S\subseteq\nsystem(c)$ then $\bigcap S\in \nsystem(c)$.  
\end{description}

Let $Z_{1}=\langle C_{1},\nsystem_{1}\rangle$ and $Z_{2}=\langle C_{2},\nsystem_{2}\rangle$
be neighborhood frames. 
A mapping $f\colon C_{1}\yy C_{2}$ is called a 
{\em homomorphism of neighborhood frames} from $Z_{1}$ to $Z_{2}$, if for 
any $c\in C_{1}$ and $X\subseteq C_{2}$, 
$$
f^{-1}[X]\in \nsystem_{1}(c)\eq X\in \nsystem_{2}(f(c))
$$
holds.  
\end{definition}

\begin{definition}
An algebra $\langle A;\lor,\land,-,\nec,0,1\rangle$ is called 
a {\em modal  algebra}, 
if its reduct $\langle A;\lor,\land,-,0,1\rangle$ is a Boolean algebra 
and $\nec$ is a unary operator on $A$. 
A modal algebra $A$ is said to be 
{\em monotonic}, 
{\em topped} and {\em cufi}, 
if it satisfies the following conditions, respectively:  
\begin{description}
\item[Monotonic]
for any $x$ and $y$ in $A$, 
$\nec (x\land y)\leq \nec x\land\nec y$; 
\item[Topped]
$\nec 1=1$; 
\item[Cufi]
for any $x$ and $y$ in $A$, 
$
\nec x\land \nec y\leq\nec(x\land y)
$. 
\end{description}
Note that 
each of monotonicity and cufi
can be defined by a single equation, and 
monotonicity is equivalent to 
\begin{equation*}
x\leq y\thn \nec x\leq\nec y.  
\end{equation*}
Let $A$ and $B$ be modal algebras. 
A mapping $f\colon A\yy B$ is called 
a {\em homomorphism of modal algebras}, if $f$ is a homomorphism 
of Boolean algebras which satisfies
$$
f(\nec x)=\nec f(x)
$$
for any $x\in A$. 
\end{definition}

\section{Rasiowa-Sikorski lemma and 
an extension of J\'{o}nsson-Tarski representation}\label{sec:rs}

Let $A$ be a Boolean algebra. A non-empty subset $F\subseteq A$ is 
called a {\em filter} of $A$, if it satisfies the following conditions: 
\begin{enumerate}
\item
$\ua F=F$;
\item
$x,\ y\in A\thn x\land y\in A$, for any $x$ and $y$ in $A$. 
\end{enumerate}
A filter is said to be {\em proper}, if $0\not\in F$. 
A proper filter $F$ is said to be {\em prime} if 
$x\lor y\in F$ then either $x\in F$ or $y\in F$, 
for any $x$ and $y$ in $A$.

\begin{definition}
{(Rasiowa-Sikorski \cite{rsw-skr50,rsw-skr63})}. 
Let $A$ be a Boolean algebra and   
$S\subseteq \power(A)$. 
A prime filter $F$ is said to be a {\em Q-filter for $S$}, if 
it satisfies that 
for any $X\in S$,  
$\iand X\in F$ 
whenever 
$\iand X\in A$ and 
$X\subseteq F$. 
\end{definition}

We write $\qflt{S}(A)$ for the set of all 
Q-filters for $S$. 
The following lemma is called the Rasiowa-Sikorski lemma 
\cite{rsw-skr50,rsw-skr63}.

\begin{lemma} \label{ba-ipft}
{\rm
(Rasiowa-Sikorski \cite{rsw-skr50,rsw-skr63})}. 
Let $A$ be a
Boolean algebra and  
$S$ a countable subset of $\power(A)$. 
Then
for any $a$ and $b$ in $A$ with $a\not\leq b$,    
there exists a Q-filter $F$ for $S$ 
such that $a\in F$ and $b\not\in F$. 
\end{lemma}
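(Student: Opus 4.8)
The plan is to build the desired Q-filter as the extension of a carefully chosen descending chain of nonzero elements, handling the countably many meets in $S$ one at a time.

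First I would discard from $S$ every $X$ whose meet $\iand X$ fails to exist in $A$, since for such $X$ the Q-filter condition is vacuous; the remaining family is still countable, so I may enumerate it as $X_{0},X_{1},X_{2},\dots$. Starting from the hypothesis $a\not\leq b$, which in a Boolean algebra means $a\land -b\neq 0$, I set $c_{0}=a\land -b$ and construct a descending sequence $c_{0}\geq c_{1}\geq c_{2}\geq\cdots$ of nonzero elements in such a way that, at stage $n$, the Q-condition for $X_{n}$ is forced to hold in any prime filter containing $c_{n+1}$.

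The step at stage $n$ splits into two cases. If $c_{n}\land\iand X_{n}\neq 0$, I put $c_{n+1}=c_{n}\land\iand X_{n}$; then $c_{n+1}\leq\iand X_{n}$, so any prime filter containing $c_{n+1}$ contains $\iand X_{n}$ and the Q-condition for $X_{n}$ holds trivially. Otherwise $c_{n}\land\iand X_{n}=0$, that is $c_{n}\leq -\iand X_{n}$, and I look for some $x\in X_{n}$ with $c_{n}\land -x\neq 0$, setting $c_{n+1}=c_{n}\land -x$; then $c_{n+1}\leq -x$, so $x$ is excluded from any prime filter containing $c_{n+1}$, whence $X_{n}\not\subseteq F$ and the Q-condition for $X_{n}$ is satisfied vacuously. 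I expect the existence of this witness $x$ to be the main obstacle, and it is precisely here that the hypothesis $\iand X_{n}\in A$ is used: if no such $x$ existed, then $c_{n}\leq x$ for every $x\in X_{n}$, so $c_{n}$ would be a lower bound of $X_{n}$ and hence $c_{n}\leq\iand X_{n}$, forcing $c_{n}=c_{n}\land\iand X_{n}=0$, contrary to $c_{n}\neq 0$.

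Finally, since the $c_{n}$ are nonzero and descending, the set $\{c_{n}\mid n\in\omega\}$ has the finite meet property and generates a proper filter, which I extend to a prime filter $F$ by the Boolean prime ideal theorem. Then $c_{0}=a\land -b\in F$ yields $a\in F$ together with $-b\in F$, so $b\notin F$; and by the construction above each enumerated $X_{n}$ satisfies the Q-condition in $F$, while the discarded $X$'s are covered vacuously. Hence $F\in\qflt{S}(A)$ with $a\in F$ and $b\notin F$, as required.
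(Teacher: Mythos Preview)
Your argument is correct and is essentially the classical Rasiowa--Sikorski construction: build a descending chain of nonzero elements that, at each stage, either forces $\iand X_{n}$ into the eventual filter or excludes a witness $x\in X_{n}$ from it, then extend to a prime filter via the prime ideal theorem. The paper itself gives no proof of this lemma; it merely states the result and cites Rasiowa--Sikorski \cite{rsw-skr50,rsw-skr63}, so there is nothing to compare against beyond noting that your proof matches the standard one found in those references.
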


The following two lemmas define a neighborhood frame from 
a given modal algebra, and vice versa.

\begin{lemma}\label{matonfr}
Let $A$ be a monotonic modal algebra and 
$S$ a countable subset of $\power(A)$. 
Define a pair 
$J_{S}(A)$ by 
$$
J_{S}(A)=\langle \qflt{S}(A),\nsystem_{A}\rangle, 
$$
where, 
$$
\nsystem_{A}(F)=
\ua
\left\{
\left\{G\in\qflt{S}(A)\mid x\in G\right\}
\mid
\nec x\in F
\right\}
$$
for any $F$.  
Then, $J_{S}(A)$ is an monotonic neighborhood frame. 
Moreover, 
$J_{S}(A)$ is topped if $A$ is topped, and $J_{S}(A)$ is cufi
if $A$ is cufi.  
\end{lemma}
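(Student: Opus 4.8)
The plan is to verify the three frame conditions directly from the definition, invoking the Rasiowa-Sikorski lemma only to guarantee that the carrier is non-empty. Throughout, for $x\in A$ I would write $\hat{x}=\{G\in\qflt{S}(A)\mid x\in G\}$, so that $\nsystem_{A}(F)=\ua\{\hat{x}\mid\nec x\in F\}$. Since $A$ is non-degenerate we have $1\not\leq 0$, so Lemma~\ref{ba-ipft} yields at least one Q-filter for $S$; hence $\qflt{S}(A)$ is non-empty and $J_{S}(A)$ is a genuine pair of the required type. Monotonicity is then immediate: $\nsystem_{A}(F)$ is by construction the upward closure in $\langle\power(\qflt{S}(A)),\subseteq\rangle$ of a family of subsets, and $\ua$ is idempotent, so $\ua\nsystem_{A}(F)=\nsystem_{A}(F)$ for every $F$. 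This step uses nothing about $A$ beyond the shape of the definition.

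For the topped case I would assume $\nec 1=1$. Every filter contains $1$, so $\nec 1=1\in F$ for each $F\in\qflt{S}(A)$, and $\hat{1}=\{G\mid 1\in G\}=\qflt{S}(A)$ since $1$ lies in every Q-filter. Hence $\qflt{S}(A)=\hat{1}\in\{\hat{x}\mid\nec x\in F\}\subseteq\nsystem_{A}(F)$, which is exactly the condition that $J_{S}(A)$ be topped.

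For the cufi case the key observation is that $x\mapsto\hat{x}$ carries binary meets to intersections: for any $x,y\in A$ and any Q-filter $G$ we have $x\land y\in G$ iff both $x\in G$ and $y\in G$, because $G$ is a filter, so $\hat{x}\cap\hat{y}=\widehat{x\land y}$. Assuming $A$ is cufi and fixing $F$, it suffices to treat binary intersections, the general finite case following by induction on cardinality. Given $U,V\in\nsystem_{A}(F)$, the definition of $\ua$ supplies $x,y$ with $\nec x,\nec y\in F$, $\hat{x}\subseteq U$ and $\hat{y}\subseteq V$. Since $F$ is a filter, $\nec x\land\nec y\in F$, and cufi of $A$ gives $\nec x\land\nec y\leq\nec(x\land y)$; as $F$ is upward closed this forces $\nec(x\land y)\in F$. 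Therefore $\widehat{x\land y}=\hat{x}\cap\hat{y}$ belongs to $\{\hat{z}\mid\nec z\in F\}\subseteq\nsystem_{A}(F)$, and since $\hat{x}\cap\hat{y}\subseteq U\cap V$, the monotonicity already established yields $U\cap V\in\nsystem_{A}(F)$.

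The only genuinely load-bearing step is the cufi case, where three ingredients must be combined in the right order: the meet-to-intersection identity $\hat{x}\cap\hat{y}=\widehat{x\land y}$, the filter and upward-closure properties of $F$ together with $\nec x\land\nec y\leq\nec(x\land y)$ to locate $\nec(x\land y)$ inside $F$, and finally the monotonicity of $\nsystem_{A}(F)$ to pass from the generator $\hat{x}\cap\hat{y}$ up to the target $U\cap V$. The monotonicity and topped clauses are essentially formal, the first being forced by the $\ua$ in the definition and the second by the fact that $\hat{1}$ is the whole carrier.
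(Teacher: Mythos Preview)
Your proof is correct and follows essentially the same route as the paper: monotonicity is dismissed as immediate from the $\ua$ in the definition, the topped case is handled via $\hat{1}=\qflt{S}(A)$, and the cufi case proceeds by unpacking two generators $\hat{x}\subseteq U$, $\hat{y}\subseteq V$, observing $\hat{x}\cap\hat{y}=\widehat{x\land y}$, and using $\nec x\land\nec y\leq\nec(x\land y)\in F$. The one point you make explicit that the paper omits is the non-emptiness of $\qflt{S}(A)$ via Lemma~\ref{ba-ipft}; note this tacitly presupposes $A$ is non-degenerate, which the statement does not literally assume but which is harmless in the intended applications.
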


\begin{proof}
It is clear from the definition that $J_{S}(A)$ is an monotonic neighborhood frame. 
Suppose $\nec 1=1$ in $A$. 
Then, for any $F\in\qflt{S}(A)$, $\nec 1\in F$. Hence, 
$$
\qflt{S}(A)=\{G\mid 1\in G\}\in\nsystem_{A}(F). 
$$
Suppose $A$ is cufi. Take any 
$X$ and $Y$ in $\nsystem_{A}(F)$. Then, there exists $\nec x$ and $\nec y$
in $F$ such that 
$$
\{G\mid x\in G\}\subseteq X,\ 
\{G\mid y\in G\}\subseteq Y,
$$
respectively. 
For any $H\in\qflt{S}(A)$, 
\begin{align*}
H\in \{G\mid x\in G\}\cap\{G\mid y\in G\}
& \eq
x\in H,\ y\in H\\
& \eq
x\land y\in H\\ 
& \eq
H\in \{G\mid x\land y\in G\}. 
\end{align*}
Hence, 
$$
\{G\mid x\land y\in G\}\subseteq X\cap Y. 
$$
Therefore, $X\cap Y\in \nsystem_{A}(F)$,  
since $\nec x\land\nec y\leq\nec (x\land y)\in F$.  
\end{proof}

\begin{lemma}\label{nfrtoma}
Let $Z=\langle C,\nsystem\rangle$ be a
neighborhood frame. 
Define $K(Z)$ by 
$$
K(Z)=
\langle
\power(C);\cup,\cap,C\setminus-,\nec_{Z},\emptyset,C
\rangle, 
$$
where
$$
\nec_{Z}X=\{c\in C\mid X\in\nsystem(c)\}
$$
for any $X\subseteq C$.
Then, $K(Z)$ is a modal algebra and 
if $Z$ is monotonic, topped and cufi, 
then
$K(Z)$ is monotonic, topped and cufi, 
respectively.  
\end{lemma}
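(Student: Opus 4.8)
The plan is to dispatch the two assertions separately. The first claim — that $K(Z)$ is a modal algebra — is essentially immediate from the definitions: the reduct $\langle\power(C);\cup,\cap,C\setminus-,\emptyset,C\rangle$ is the familiar power set Boolean algebra, and $\nec_{Z}$ is by construction a unary operation on $\power(C)$. Since the definition of modal algebra places no further constraint on the modal operator, nothing more is required here.

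The substance lies in the three preservation claims, each of which I would verify by unwinding the pointwise definition $\nec_{Z}X=\{c\in C\mid X\in\nsystem(c)\}$ and matching it against the corresponding frame condition. For monotonicity I would use the equivalent order-theoretic form noted after the definition of modal algebra, namely $X\subseteq Y\thn\nec_{Z}X\subseteq\nec_{Z}Y$: given $X\subseteq Y$ and $c\in\nec_{Z}X$, we have $X\in\nsystem(c)$, and since $Z$ is monotonic $\nsystem(c)=\ua\nsystem(c)$ is upward closed, so $Y\in\nsystem(c)$ and hence $c\in\nec_{Z}Y$. For the topped case, since $C\in\nsystem(c)$ for every $c$, we get $\nec_{Z}C=\{c\mid C\in\nsystem(c)\}=C$, which is the condition $\nec 1=1$ read off in $K(Z)$.

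For cufi I would argue that if $c\in\nec_{Z}X\cap\nec_{Z}Y$ then $X,Y\in\nsystem(c)$, so applying the cufi condition of $Z$ to the finite set $\{X,Y\}$ yields $X\cap Y\in\nsystem(c)$, whence $c\in\nec_{Z}(X\cap Y)$; thus $\nec_{Z}X\cap\nec_{Z}Y\subseteq\nec_{Z}(X\cap Y)$, which is exactly the cufi inequality $\nec x\land\nec y\leq\nec(x\land y)$ for $K(Z)$. None of these steps presents a genuine obstacle, as the argument is a direct dualization; the only point deserving a word of care is recognizing that the frame-level cufi condition, phrased for an arbitrary non-empty finite $S$, reduces for the purposes of this lemma to closure under binary intersection, which is precisely what the algebraic cufi inequality records.
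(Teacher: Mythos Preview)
Your proposal is correct and follows essentially the same approach as the paper: you verify that the power set algebra with $\nec_{Z}$ is a modal algebra, then check monotonicity via the order-theoretic formulation $X\subseteq Y\Rightarrow\nec_{Z}X\subseteq\nec_{Z}Y$, toppedness by observing $\nec_{Z}C=C$, and cufi by a pointwise chain of equivalences and implications identical to the paper's. Your additional remarks (that the modal-algebra part is immediate and that binary intersection suffices for cufi) are accurate and merely make explicit what the paper leaves implicit.
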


\begin{proof}
Suppose $Z$ is monotonic. 
Then, 
for any subsets $X$ and $Y$ of $C$,  
it is clear from the definition that $X\subseteq Y$ implies 
$\nec_{Z}X\subseteq \nec_{Z} Y$. 

Suppose $Z$ is topped. Then, $C\in\nsystem(c)$ for any 
$c\in C$. Hence, 
$\nec_{Z}C=C$. 

Suppose $Z$ is cufi. 
Take any subsets $S$ and $T$ of $C$. 
For any $c\in C$, 
\begin{align*}
c\in\nec_{Z}S\cap \nec_{Z}T
& \eq
S\in\nsystem(c),\ 
T\in\nsystem(c)\\
& \thn
S\cap T\in\nsystem(c)\\
& \eq
c\in\nec_{Z}(S\cap T)
\end{align*}
Hence, 
$
\nec_{Z}S\cap \nec_{Z}T
\subseteq
\nec_{Z}(S\cap T)$.  
\end{proof}

We call $K(Z)$ the {\em dual algebra} of a neighborhood frame $Z$. 
Now, we show an extension of the J\'{o}nsson-Tarski representation:

\begin{theorem}\label{extjt}
Let $A$ be a monotonic  modal algebra and  
$S$ a countable subset of  $\power(A)$. 
Then, a mapping 
$$
f\colon  A\yy K\circ J_{S}(A)
$$
defined by 
$$
f(x)=\{F\in\qflt{S}(A)\mid x\in F\}
$$
is a monomorphism of modal algebras which satisfies that 
for any $X\in S$, if $\iand X\in A$ then  
$f\left(\iand X\right)=\iand f[X]$. 
\end{theorem}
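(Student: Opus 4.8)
The plan is to verify four things in turn: that $f$ is a homomorphism of Boolean algebras, that it commutes with $\nec$, that it is injective, and finally that it preserves the distinguished countable meets. Throughout I use that every $F\in\qflt{S}(A)$ is in particular a proper prime filter. The Boolean part I would dispatch by a pointwise computation on each operation: $f(0)=\emptyset$ and $f(1)=\qflt{S}(A)$ since each $F$ is proper; $f(x\land y)=f(x)\cap f(y)$ because filters are closed under $\land$ and upward closed; $f(x\lor y)=f(x)\cup f(y)$, where primeness supplies the nontrivial inclusion; and $f(-x)=\qflt{S}(A)\setminus f(x)$ because $-x\in F\eq x\notin F$ for a prime filter. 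These steps are routine and use only the prime-filter structure.

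For the modal clause I must show $\nec x\in F\eq f(x)\in\nsystem_{A}(F)$ for every $F$, since by the definition of the dual algebra $K$ we have $\nec_{J_{S}(A)}f(x)=\{F\mid f(x)\in\nsystem_{A}(F)\}$. The forward direction is immediate: if $\nec x\in F$, then the generator $\{G\mid x\in G\}$, which is literally $f(x)$, lies in $\nsystem_{A}(F)$. The converse is the crux. If $f(x)\in\nsystem_{A}(F)$, then since $\nsystem_{A}(F)$ is defined as an upward closure there is some $y$ with $\nec y\in F$ and $\{G\mid y\in G\}\subseteq\{G\mid x\in G\}=f(x)$. I would then invoke the Rasiowa--Sikorski lemma (Lemma~\ref{ba-ipft}) to upgrade this inclusion to the algebraic inequality $y\leq x$: were $y\not\leq x$, the lemma would furnish a Q-filter $G$ with $y\in G$ and $x\notin G$, contradicting the inclusion. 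From $y\leq x$ and the monotonicity of $A$ I get $\nec y\leq\nec x$, whence upward closure of $F$ gives $\nec x\in F$.

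I expect this converse to be the main obstacle, since it is the one place where a purely semantic inclusion of sets of Q-filters must be converted back into a genuine algebraic inequality; the conversion is exactly what the Rasiowa--Sikorski lemma buys us, and the passage $y\leq x\thn\nec y\leq\nec x$ is precisely where monotonicity of $A$ (rather than normality) is consumed. This is also the point that explains why dropping \eqref{rsinfb} forces us onto neighborhood frames rather than Kripke frames.

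The remaining two points are short. Injectivity follows again from Lemma~\ref{ba-ipft}: if $x\neq y$, say $x\not\leq y$, the lemma separates them by a Q-filter $F$ with $x\in F$ and $y\notin F$, so $f(x)\neq f(y)$, and together with the homomorphism property this makes $f$ a monomorphism. For the preservation of meets, fix $X\in S$ with $\iand X\in A$ and note that in $K\circ J_{S}(A)$ we have $\iand f[X]=\bigcap_{x\in X}f(x)=\{F\mid X\subseteq F\}$. The inclusion $f(\iand X)\subseteq\iand f[X]$ holds because $\iand X\leq x$ for each $x\in X$ together with upward closure of $F$. The reverse inclusion is exactly where the defining property of a Q-filter for $S$ is needed: if $X\subseteq F$, then since $X\in S$ and $\iand X\in A$, the Q-filter condition forces $\iand X\in F$, so $F\in f(\iand X)$.
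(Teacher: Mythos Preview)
Your proof is correct and follows essentially the same route as the paper's: the Boolean part, the Q-filter argument for the distinguished meets, and the Rasiowa--Sikorski separation for injectivity are identical, and your treatment of the modal clause is the direct form of the paper's contrapositive argument (the paper assumes $\nec x\notin F$ and shows $f(y)\not\subseteq f(x)$ for every $y$ with $\nec y\in F$, whereas you pick a witness $y$ and upgrade the inclusion to $y\leq x$). The use of monotonicity and Lemma~\ref{ba-ipft} occurs at exactly the same point in both versions.
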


\begin{proof}
It is easy to see that $f$ is a homomorphism of Boolean algebras. 
We show $f$ preserves modal operator. 
For any $x\in A$, $f(\nec x)\subseteq \nec_{J_{S}(A)}f(x)$, since 
\begin{align*}
F\in f(\nec x)
& \eq 
\nec x\in F\\
& \thn
f(x)\in\nsystem_{A}(F)\\
& \eq 
F\in\nec_{J_{S}(A)} f(x). 
\end{align*}
We show converse. Suppose $\nec x\not\in F$. 
Take any $y\in A$ such that $\nec y\in F$. 
Then, $y\not\leq x$ by monotonicity of $A$.  
By Rasiowa-Sikorski lemma, there exists $G\in\qflt{S}(A)$ such that 
$y\in G$ and $x\not\in G$. 
Hence, $f(y)\not\subseteq f(x)$. Therefore, $f(x)\not\in\nsystem_{A}(F)$. 
Thus, $f$ is a homomorphism of  modal algebras.

Suppose $X\in S$ and $\iand X\in A$. 
For any $F\in\qflt{S}(A)$,  
\begin{align*}
F\in f\left(\iand X\right)
& \eq
\iand X\in F\\
& \eq
X\subseteq F &&
\text{($F$ is a Q-filter)}\\
& \eq
\forall x\in X(F\in f(x))\\
& \eq
F\in \bigcap_{x\in X}f(x). 
\end{align*}

\noindent
Take any $x$ and $y$ in $A$. If $x\not\leq y$, there exists a
Q-filter $F$ such that $x\in F$ and $y\not\in F$. 
Hence, $f(x)\not=f(y)$. Therefore, $f$ is a monomorphism of 
modal algebras.  
\end{proof}

\section{Model existence for predicate monotonic modal logics}\label{sec:mex}

The language we consider consists of the 
following symbols:
\begin{enumerate} 
\item
a countable set $\variable$ of variables;

\item
$\top$ and $\bot$;

\item logical connectives: 
$\land$,  $\neg$;

\item
quantifier:
$\forall$;

\item
for each $n\in\mathbb{N}$, 
countably many predicate symbols 
$P$, $Q$, $R$, $\cdots$ of arity $n$;

\item
modal operator
$\nec$.
\end{enumerate}

\noindent
The set $\Phi$ of formulas is 
the smallest set which satisfies:

\begin{enumerate}
\item
$\top$ and $\bot$ are in $\Phi$;

\item 
if $P$ is a predicate symbol of arity $n$ and 
$x_{1},\ldots,x_{n}$ are variables 
then
$P(x_{1},\ldots,x_{n})$ is in $\Phi$;

\item 
if $\phi$ and $\psi$ are in $\Phi$
then $(\phi\land\psi)$ 
is in $\Phi$;

\item 
if $\phi\in\Phi$ 
then $(\neg\phi)$ and 
$(\nec\phi)$  are in $\Phi$;

\item
if $\phi\in\Phi$ 
and $x\in\variable$ 
then $(\forall x\phi)\in\Phi$. 
\end{enumerate}

The symbols $\lor$, $\supset$,  $\exists$ are defined in a usual way.  
We write 
$\phi\equiv\psi$  and $\Diamond\phi$ for abbreviations of 
$(\phi\supset\psi)\land(\psi\supset\phi)$, and $\neg\nec\neg\phi$, 
respectively.

A {\em neighborhood model} for predicate modal logic is a four tuple 
$\langle C,\nsystem,\ndom,\nint\rangle$, 
where $\langle C,\nsystem\rangle$ is a neighborhood frame, 
$\ndom$ is a non-empty set called a {\em domain} and 
$\nint$ is a mapping 
called an {\em interpretation}
which maps 
each pair $(c,P)$, where $c\in C$ and 
$P$ is an $n$-ary predicate symbol, to an $n$-ary relation
$\nint(c,P)\subseteq \ndom^{n}$ over $\ndom$. 
An {\em assignment} $\nass$ for $\nmodel{M}$ 
is a mapping from the set $\variable$ of variables to 
$\ndom$. 
For each neighborhood model $\nmodel{M}=\langle C,\nsystem,\ndom,\nint\rangle$ and 
each assignment $\nass$, 
the valuation $\nval{\nass}$ of a formula $\phi$ on $\nmodel{M}$ is 
defined inductively, as follows:
\begin{enumerate}
\item
$\nval{\nass}(\top)=C$, 
$\nval{\nass}(\bot)=\emptyset$; 
\item 
for any $c\in C$,
any predicate $P$ of arity $n$ and any variables $x_{1},\ldots,x_{n}$, 
$c\in\nval{\nass}(P(x_{1},\ldots,x_{n}))$
iff
$(\nass(x_{1}),\ldots,\nass(x_{n}))\in \nint(c,P)$;
\item 
$\nval{\nass}(\phi\land\psi)=
\nval{\nass}(\phi)\cap\nval{\nass}(\psi)$;
\item 
$\nval{\nass}(\neg\phi)=
C\setminus \nval{\nass}(\phi)$;
\item 
$\nval{\nass}(\forall x\phi)=
\bigcap_{\nass'\in A_{x}}\nval{\nass'}(\phi)$, 
where $A_{x}$ is the set of all assignments for $\nmodel{M}$
which are different from $\nass$ only 
in the value of $x$;
\item 
$c\in\nval{\nass}(\nec\phi)$
$\eq$
$\nval{\nass}(\phi)\in\nsystem(c)$. 
\end{enumerate}

Let 
$\nmodel{M}=\langle C,\nsystem,\ndom,\nint\rangle$ be a neighborhood model 
and $\nass$ an assignment for $\nmodel{M}$. 
Take any formula $\phi$ and 
suppose that $\nass'$ is an assignment for $\nmodel{M}$ such that 
$
\nass(x)=\nass'(x)
$
for any $x\in\variable$ which occur freely in $\phi$. 
Then $\nval{\nass}(\phi)=\nval{\nass'}(\phi)$. 
Hence, if $\phi$ is a closed formula then $\nval{\nass}(\phi)=\nval{\nass'}(\phi)$ 
for any assignments $\nass$ and $\nass'$ for $\nmodel{M}$.

Let $Z=\langle C,\nsystem\rangle$ be a neighborhood frame. 
For any neighborhood model $\nmodel{M}=\langle C,\nsystem,\ndom,\nint\rangle$, 
any $c\in C$ and any formula $\phi$, 
we write 
$
\nmodel{M},c\vl\phi
$
if $c\in\nval{\nass}(\phi)$ for any assignment $\nass$ for $\nmodel{M}$. 
If 
$
\nmodel{M},c\vl\phi
$
for any $c\in C$, we write 
$
\nmodel{M}\vl\phi
$.
If 
$
\nmodel{M}\vl\phi
$ 
for any domain $\ndom$ and any interpretation $\nint$, 
we write 
$
Z\vl\phi
$. 
Let $C$ be a class of neighborhood frames. For any formula $\phi$, 
we write $C\vl\phi$ if $Z\vl\phi$ for any $Z\in C$.

A set $\logic{L}$ of formulas is called 
a {\em predicate modal logic}, if it 
contains classical predicate logic as a subset and closed under substitution, modus ponens 
and the following two inference rules: 
\begin{enumerate}
\item
for any $\phi\in\logic{L}$ and $x\in\variable$, 
if $\phi\in\logic{L}$ then $\forall x\phi\in\logic{L}$;
\item
for any $\phi$ and $\psi$ in $\logic{L}$, 
if $\phi\equiv\psi\in\logic{L}$ then $\nec\phi\equiv\nec\psi\in\logic{L}$. 
\end{enumerate}
A predicate modal logic $\logic{L}$ is said to be 
{\em monotonic}, {\em topped} and {\em cufi}, 
if 
$\nec(p\land q)\supset\nec p\land \nec q\in\logic{L}$,
$\nec\top\in\logic{L}$
and 
$\nec p\land\nec q\supset\nec(p\land q)\in\logic{L}$,  respectively. 
If a modal logic $\logic{L}$ is topped, then the necessitation rule
is admissible in $\logic{L}$. 
A predicate  modal logic $\logic{L}$ is said to be {\em normal} if it is
monotonic, topped and cufi. The least predicate normal modal logic
is the least predicate extension of $\logic{K}$, which we write 
$\plogic{K}$.

Let $\logic{L}$ be a predicate modal logic. Define 
a binary relation $\lcong{L}$ on the set $\Phi$ of all formulas by 
$\phi\lcong{L}\psi$ iff $\phi\equiv\psi\in\logic{L}$. 
Then, $\Phi/\lcong{L}$ is a well-defined  modal algebra, 
which is called the {\em Lindenbaum algebra} of $\logic{L}$. 
For any formula $\phi$, we write $|\phi|$ for the equivalence class of $\phi$. 
We claim that for any formula $\phi$ and any $x\in\variable$, 
$|\forall x\phi|=\iand_{y\in\variable}|\phi[x:=y]|$ holds in the Lindenbaum 
algebra of $\logic{L}$. Since $\forall x\phi\supset\phi[x:=y]$ is in $\logic{L}$ 
for any $y\in\variable$, 
the set of all equivalence classes of the shape $|\phi[x:=y]|$ 
has $|\forall x\phi|$ as its lower bound. 
We show $|\forall x\phi|$ is the greatest lower bound of this set. 
Suppose that $|\psi|$ is a lower bound of this set. Take 
a variable $z$ which does not occur in $\psi$ and $\phi$. 
Then, $\forall z(\psi\supset\phi[x:=z])\in\logic{L}$, 
since $\psi\supset\phi[x:=z]\in\logic{L}$. 
As $z$ does not occur in $\psi$, 
$\psi\supset\forall z(\phi[x:=z])\in\logic{L}$.   
Hence, $|\psi|\leq|\forall z(\phi[x:=z])|=|\forall x\phi|$. 
This complete the proof of the claim. 
The following is the model existence theorem for predicate monotonic  
modal logic.

\begin{theorem}\label{mdlex}
For any predicate monotonic  modal logic $\logic{L}$, there exists 
a monotonic neighborhood model $\nmodel{M}$ with constant domains which satisfies that 
\begin{equation}\label{modelex}
\phi\in\logic{L}\eq\nmodel{M}\vl\phi
\end{equation}
for any closed formula $\phi$. 
If $\logic{L}$ is topped and/or cufi, 
there exists a 
topped and/or cufi
monotonic neighborhood model $\nmodel{M}$ with constant domains
which satisfies \eqref{modelex}, respectively.
\end{theorem}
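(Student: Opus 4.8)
The plan is to run an algebraic Henkin-style argument through the Lindenbaum algebra together with the representation of Theorem~\ref{extjt}. Let $A=\Phi/\lcong{L}$ be the Lindenbaum algebra of $\logic{L}$; since $\logic{L}$ is monotonic, $A$ is a monotonic modal algebra, and it is topped (resp.\ cufi) whenever $\logic{L}$ is topped (resp.\ cufi). Define
$$
S=\bigl\{\,\{\,|\psi[x:=y]|\mid y\in\variable\,\}\;\big|\;\psi\in\Phi,\ x\in\variable\,\bigr\}\subseteq\power(A).
$$
Because $\Phi$ and $\variable$ are countable, $S$ is countable, and by the claim established above each member $X=\{|\psi[x:=y]|\mid y\in\variable\}$ of $S$ has a meet in $A$, namely $\iand X=|\forall x\psi|$. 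Applying Theorem~\ref{extjt} to $A$ and $S$ yields a monomorphism $f\colon A\yy K\circ J_{S}(A)$ of modal algebras such that $f(|\forall x\psi|)=\iand f[X]=\bigcap_{y\in\variable}f(|\psi[x:=y]|)$ for every such $X$.

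Next I would turn $J_{S}(A)=\langle\qflt{S}(A),\nsystem_{A}\rangle$ into a neighborhood model. Take the domain $\ndom$ to be the set $\variable$ of variables, so that an assignment is a substitution $\nass\colon\variable\yy\variable$, and define the interpretation by
$$
\nint(F,P)=\{(y_{1},\ldots,y_{n})\in\variable^{n}\mid |P(y_{1},\ldots,y_{n})|\in F\}
$$
for each $F\in\qflt{S}(A)$ and each $n$-ary predicate symbol $P$. Call the resulting model $\nmodel{M}$. By Lemma~\ref{matonfr} the underlying frame is monotonic, and topped (resp.\ cufi) when $A$, hence $\logic{L}$, is topped (resp.\ cufi), so $\nmodel{M}$ has the required structural properties; its domain is constant by construction.

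The core of the argument is a truth lemma proved by induction on $\phi$: for every formula $\phi$ and every assignment $\nass$,
$$
\nval{\nass}(\phi)=f(|\phi^{\nass}|),
$$
where $\phi^{\nass}$ denotes the result of the (capture-avoiding) simultaneous substitution of $\nass(x)$ for each free variable $x$ of $\phi$. The atomic case is immediate from the definition of $\nint$, the Boolean cases follow because $f$ is a Boolean homomorphism, and the $\nec$ case follows because $f$ preserves $\nec$ together with the definition $\nec_{Z}X=\{c\mid X\in\nsystem(c)\}$ of the modal operator in $K(Z)$. Finally, I would finish by observing that for a closed formula $\phi$ we have $\phi^{\nass}=\phi$ and $\nval{\nass}(\phi)$ is independent of $\nass$, so $\nmodel{M}\vl\phi$ holds iff $f(|\phi|)=\qflt{S}(A)=f(1)$; since $f$ is a monomorphism this is equivalent to $|\phi|=1$, i.e.\ to $\phi\in\logic{L}$.

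The step I expect to be the main obstacle is the universal-quantifier case of the truth lemma. Here $\nval{\nass}(\forall x\phi)=\bigcap_{y\in\variable}\nval{\nass[x:=y]}(\phi)=\bigcap_{y\in\variable}f(|\phi^{\nass[x:=y]}|)$ by the induction hypothesis, and one must recognize the set $\{|\phi^{\nass[x:=y]}|\mid y\in\variable\}$ as a member of $S$, so that Theorem~\ref{extjt} lets $f$ commute with the meet, yielding $f(\iand_{y}|\phi^{\nass[x:=y]}|)=f(|(\forall x\phi)^{\nass}|)$. Making this precise requires careful bookkeeping of capture-avoiding substitution: one first substitutes $\nass$ for the free variables other than $x$, renaming bound variables of $\phi$ as needed so that $x$ stays free and no $\nass(z)$ is captured, obtaining some $\psi$ with $\phi^{\nass[x:=y]}=\psi[x:=y]$ for all $y$, whence the set is exactly $\{|\psi[x:=y]|\mid y\in\variable\}\in S$. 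The remaining verifications are routine.
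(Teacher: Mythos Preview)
Your proposal is correct and follows essentially the same route as the paper: Lindenbaum algebra, the countable family $S$ indexing the universal instances, the frame $J_{S}(A)$, domain $\ndom=\variable$, the same interpretation $\nint$, and the embedding $f$ of Theorem~\ref{extjt}. The only difference is one of explicitness: the paper fixes the identity assignment and simply asserts $\nval{\nass}(\phi)=f(|\phi|)$, whereas you prove the stronger truth lemma $\nval{\nass}(\phi)=f(|\phi^{\nass}|)$ for all $\nass$ and spell out the capture-avoiding substitution needed in the $\forall$-step---which is exactly the detail the paper's one-line claim suppresses. Your formulation of $S$ as a subset of $\power(A)$ is also the intended one.
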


\begin{proof}
Let $\lind{L}$ be the Lindenbaum algebra of $\logic{L}$. 
Define $S\subseteq\lind{L}$ by 
$$
S=\{|\forall x\phi| \mid x\in\variable,\ \phi\in\Phi\}. 
$$
Define a neighborhood model 
$\nmodel{M}=\langle C,\nsystem,\ndom,\nint\rangle$
by $\langle C,\nsystem\rangle=J_{S}(\lind{L})$, 
that is, 
the neighborhood frame obtained from $\lind{L}$ in Lemma~\ref{matonfr}, 
$\ndom=\variable$ and 
$$
(x_{1},\ldots,x_{n})\in\nint(F,P)
\eq
|P(x_{1},\ldots,x_{n})|\in F
$$
for any variables $x_{1},\ldots,x_{n}$, any $n$-ary predicate $P$ and 
any Q-filter $F$ for $S$. 
Define an assignment $\nass$ for $\nmodel{M}$ by $\nass(x)=x$ for any $x\in\variable$. 
Then, for any formula $\phi$, it follows that 
$\nval{\nass}(\phi)=f(|\phi|)$, where $f$ is  a monomorphism given 
in Theorem~\ref{extjt}. 
Since $f$ is a monomorphism of modal algebras, 
\begin{align*}
\phi\in\logic{L}
& \eq
\text{$|\phi|=1$ in $\lind{L}$}\\
& \eq
\text{$f(|\phi|)=1$ in $K(J_{S}(\lind{L}))$}\\
& \eq
\nval{\nass}(\phi)=C. 
\end{align*}
If $\phi$ is a closed formula, the last equation is equivalent to 
$\nmodel{M}\vl\phi$. 
By Lemma~\ref{matonfr}, 
$J_{S}(\lind{L})$ is 
topped and/or cufi, whenever
$\logic{L}$ is topped and/or cufi, respectively. 
\end{proof}

\begin{corollary} (Arl\'{o}-Costa and Pauit \cite{arlcst-pct06}). 
The least (topped and/or cufi) predicate monotonic 
modal logic 
is sound and complete with respect to the class of 
(topped and/or cufi)
monotonic neighborhood frames with constant domains, respectively. 
\end{corollary}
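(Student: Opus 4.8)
The plan is to establish the two halves separately. Write $\logic{L}$ for the least (topped and/or cufi) predicate monotonic modal logic and let $C$ be the corresponding class of (topped and/or cufi) monotonic neighborhood frames. Soundness amounts to $\phi\in\logic{L}\thn C\vl\phi$, which I would prove by induction on the length of a derivation; completeness amounts to the converse $C\vl\phi\thn\phi\in\logic{L}$, which I would derive from the model existence theorem (Theorem~\ref{mdlex}).

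For soundness the induction reduces to checking that each axiom is valid on every $Z\in C$ and that each inference rule preserves frame validity. Every theorem of classical predicate logic receives valuation $C$ in any constant-domain neighborhood model, because on subsets of $C$ the connectives are interpreted by the Boolean operations of $\power(C)$ and $\forall$ by intersection over the single domain $\ndom$, so the classical reasoning goes through worldwise. The congruence rule for $\nec$ preserves validity since $\nval{\nass}(\nec\phi)$ depends on $\phi$ only through the set $\nval{\nass}(\phi)$: if $\nval{\nass}(\phi)=\nval{\nass}(\psi)$ then $c\in\nval{\nass}(\nec\phi)\eq\nval{\nass}(\phi)\in\nsystem(c)\eq\nval{\nass}(\psi)\in\nsystem(c)\eq c\in\nval{\nass}(\nec\psi)$. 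The monotonicity axiom $\nec(p\land q)\supset\nec p\land\nec q$ holds because $\nsystem(c)$ is upward closed and $\nval{\nass}(p)\cap\nval{\nass}(q)$ is contained in each of $\nval{\nass}(p)$ and $\nval{\nass}(q)$; $\nec\top$ holds on topped frames since $C\in\nsystem(c)$ for all $c$, and $\nec p\land\nec q\supset\nec(p\land q)$ holds on cufi frames directly from closure under finite intersections. Modus ponens, generalization, and uniform substitution preserve frame validity by the usual compositional arguments.

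For completeness I argue contrapositively, after reducing to closed formulas. Writing $\phi'$ for the universal closure of $\phi$, we have $\phi\in\logic{L}$ iff $\phi'\in\logic{L}$ (since $\logic{L}$ is closed under generalization and contains $\forall x\psi\supset\psi[x:=y]$) and $C\vl\phi$ iff $C\vl\phi'$ (since $\nval{\nass}(\forall x\phi)=\bigcap_{\nass'}\nval{\nass'}(\phi)$ already ranges over all reassignments). So assume $\phi$ closed with $\phi\not\in\logic{L}$. Applying Theorem~\ref{mdlex} to $\logic{L}$, which is monotonic and, being the least topped/cufi logic, topped/cufi whenever required, yields a monotonic (topped/cufi) neighborhood model $\nmodel{M}$ with constant domains, built on the frame $J_{S}(\lind{L})$ produced there, such that $\psi\in\logic{L}\eq\nmodel{M}\vl\psi$ for every closed $\psi$. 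Hence $\nmodel{M}\not\vl\phi$, and since by Lemma~\ref{matonfr} the frame $J_{S}(\lind{L})$ belongs to $C$, we conclude $C\not\vl\phi$, as required.

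The substantive work all sits in the two results already in hand, Theorem~\ref{extjt} and the model existence theorem, of which this statement is essentially a packaging. Accordingly the only genuine labor left is the soundness verification, and within it the classical predicate axioms in the constant-domain neighborhood semantics; I expect the one place to take real care is the open-to-closed reduction, matching closure under generalization on the syntactic side with the intersection interpretation of $\forall$ on the semantic side.
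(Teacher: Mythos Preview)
Your proof is correct and follows essentially the same approach as the paper, which simply says ``soundness follows from the leastness and completeness follows from Theorem~\ref{mdlex}.'' You have spelled out more detail than the paper does---in particular the reduction to closed formulas, which the paper glosses over but which is genuinely needed since Theorem~\ref{mdlex} only speaks of closed $\phi$.
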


\begin{proof}
Soundness follows from the leastness and completeness follows from 
Theorem~\ref{mdlex}. 
\end{proof}

Hence, $\plogic{K}$ is sound and complete with respect to the class 
$C$ of monotonic topped cufi neighborhood models with constant domains. 
It is well-known that the Barcan formula $\predb=\barcan$ is not a member of 
$\plogic{K}$. 
Indeed, it is shown in \cite{arlcst02} that 
$C\not\vl\predb$.

\section{Non-monotonic predicate modal logics}\label{sec:non-monotonic}

To apply the discussion in Section~\ref{sec:mex} to 
predicate non-monotonic  modal logic, 
we change the definition of $J_{S}(A)$ in Lemma~\ref{matonfr}
as follows:

\begin{lemma}
Let $A$ be a non-monotonic  modal algebra and 
$S$ a countable subset of $\power(A)$. 
Define a pair 
$\bar{J}_{S}(A)$ by 
$\bar{J}_{S}(A)=\langle \qflt{S}(A),\nsystem_{A}\rangle$,
where, 
$$
\nsystem_{A}(F)=
\left\{
\left\{G\in\qflt{S}(A)\mid x\in G\right\}
\mid
\nec x\in F
\right\}
$$
for any $F$.  
Then, $\bar{J}_{S}(A)$ is a neighborhood frame, and 
it is topped if $A$ is topped, and it is cufi
if $A$ is cufi
\end{lemma}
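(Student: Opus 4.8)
The plan is to establish three separate claims, following the same pattern as the proof of Lemma~\ref{matonfr} but now \emph{without} the upward-closure operator $\ua$ in the definition of $\nsystem_{A}$. The key structural difference is that $\bar{J}_{S}(A)$ is no longer automatically monotonic, so the only assertions to verify are: (i) $\bar{J}_{S}(A)$ is a genuine neighborhood frame; (ii) it is topped whenever $A$ is topped; and (iii) it is cufi whenever $A$ is cufi. Claim (i) is immediate, since $\nsystem_{A}$ is by construction a well-defined mapping from $\qflt{S}(A)$ to $\power(\power(\qflt{S}(A)))$ and $\qflt{S}(A)$ is non-empty by the Rasiowa-Sikorski lemma (Lemma~\ref{ba-ipft}), so no work is needed there.

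For the topped case I would argue exactly as in Lemma~\ref{matonfr}: if $\nec 1=1$ in $A$, then $\nec 1\in F$ for every $F\in\qflt{S}(A)$, and since $\{G\in\qflt{S}(A)\mid 1\in G\}=\qflt{S}(A)$ (as every proper filter contains $1$), we get $\qflt{S}(A)\in\nsystem_{A}(F)$ directly from the defining set, rather than needing to pass through the upward closure. The one point to note is that this step is now cleaner precisely because we no longer rely on $\ua$ to produce the top set, so the argument transfers verbatim.

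For the cufi case the plan is to take $X,Y\in\nsystem_{A}(F)$ and, using the new definition, find $\nec x,\nec y\in F$ with $X=\{G\mid x\in G\}$ and $Y=\{G\mid y\in G\}$ (equality, not merely inclusion, since there is no $\ua$). I would then reproduce the chain of equivalences from Lemma~\ref{matonfr} showing that for any Q-filter $H$, we have $H\in X\cap Y \eq x\land y\in H$, so that $X\cap Y=\{G\mid x\land y\in G\}$. Since $A$ is cufi, $\nec x\land\nec y\leq\nec(x\land y)$, and because $F$ is a filter containing both $\nec x$ and $\nec y$ it contains their meet and hence $\nec(x\land y)$; therefore $X\cap Y\in\nsystem_{A}(F)$ by the defining set. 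Extending from two sets to an arbitrary non-empty finite set $S$ follows by an easy induction.

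The main subtlety I anticipate is not in the finite-intersection computation itself but in tracking the effect of removing $\ua$. In Lemma~\ref{matonfr} the inclusions $\{G\mid x\in G\}\subseteq X$ were enough because upward closure guaranteed that any superset of a basic set lay in $\nsystem_{A}(F)$; here I must work with exact equalities $X=\{G\mid x\in G\}$, which actually simplifies the cufi argument (the intersection is itself exactly a basic set $\{G\mid x\land y\in G\}$ rather than merely containing one). So the proof is if anything more direct than the monotonic version, and the only thing requiring care is confirming that each set appearing in $\nsystem_{A}(F)$ really is of the basic form $\{G\mid x\in G\}$ for some $\nec x\in F$, which is now immediate from the definition.
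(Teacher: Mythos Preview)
Your proposal is correct and follows essentially the same approach as the paper. The paper's proof is even terser: it dismisses the topped case with ``it is easy to see,'' and for cufi it takes $X=\{G\mid x\in G\}$, $Y=\{G\mid y\in G\}$ with $\nec x,\nec y\in F$, then concludes $X\cap Y=\{G\mid x\land y\in G\}\in\nsystem_{A}(F)$ from $\nec x\land\nec y\leq\nec(x\land y)\in F$ in a single line---exactly your argument, minus the explicit chain of equivalences and the remark about non-emptiness of $\qflt{S}(A)$.
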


\begin{proof}
It is easy to see that is $\bar{J}_{S}(A)$ is a topped neighborhood frame 
whenever $A$ is topped. 
Suppose $A$ is cufi. Take any 
$X$ and $Y$ in $\nsystem_{A}(F)$. Then, there exists $\nec x$ and $\nec y$
in $F$ such that 
$$
X=\{G\mid x\in G\},\
Y=\{G\mid y\in G\},
$$
respectively. 
Since $\nec x\land\nec y\leq\nec (x\land y)\in F$,   
$$
X\cap Y=\{G\mid x\land y\in G\}\in\nsystem_{A}(F).
$$
\end{proof}

Then, we obtain the following representation theorem. 

\begin{theorem}\label{nextjt}
Let $A$ be a  non-monotonic  modal algebra and  
$S$ a countable subset of  $\power(A)$. 
Then, a mapping 
$$
f\colon  A\yy K\circ \bar{J}_{S}(A)
$$
defined by 
$$
f(x)=\{F\in\qflt{S}(A)\mid x\in F\}
$$
is a monomorphism of  modal algebras which satisfies that 
for any $X\in S$, if $\iand X\in A$ then  
$f\left(\iand X\right)=\iand f[X]$. 
\end{theorem}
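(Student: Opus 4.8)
The plan is to mirror the proof of Theorem~\ref{extjt} almost verbatim, since the only change is that $\bar{J}_{S}(A)$ uses the non-upward-closed neighborhood assignment $\nsystem_{A}(F)=\{\{G\mid x\in G\}\mid \nec x\in F\}$ rather than its upward closure. First I would observe that $f$ is a homomorphism of Boolean algebras and is injective for exactly the same reason as before: by the Rasiowa-Sikorski lemma (Lemma~\ref{ba-ipft}), if $x\not\leq y$ there is a Q-filter $F$ for $S$ with $x\in F$ and $y\not\in F$, so $f(x)\neq f(y)$. Likewise, the preservation of countable meets, $f(\iand X)=\iand f[X]$ for $X\in S$ with $\iand X\in A$, uses only the defining property of Q-filters and is unchanged from Theorem~\ref{extjt}: for any $F\in\qflt{S}(A)$ we have $\iand X\in F\eq X\subseteq F\eq \forall x\in X(F\in f(x))$, the middle equivalence being precisely the Q-filter condition.

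The real content is verifying $f(\nec x)=\nec_{\bar J_{S}(A)}f(x)$, and here I would be careful, because the forward inclusion no longer follows from monotonicity. The definition of $\bar{J}_{S}(A)$ makes this direction immediate instead: if $\nec x\in F$, then the set $f(x)=\{G\in\qflt{S}(A)\mid x\in G\}$ is \emph{by definition} one of the generating members of $\nsystem_{A}(F)$, hence $f(x)\in\nsystem_{A}(F)$, i.e.\ $F\in\nec_{\bar J_{S}(A)}f(x)$. So
\begin{align*}
F\in f(\nec x)
& \eq \nec x\in F\\
& \thn f(x)\in\nsystem_{A}(F)\\
& \eq F\in \nec_{\bar J_{S}(A)}f(x),
\end{align*}
giving $f(\nec x)\subseteq \nec_{\bar J_{S}(A)}f(x)$.

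For the converse inclusion I would suppose $F\in\nec_{\bar J_{S}(A)}f(x)$, that is $f(x)\in\nsystem_{A}(F)$. By the definition of $\nsystem_{A}(F)$ there must exist $y\in A$ with $\nec y\in F$ and $\{G\mid y\in G\}=f(x)$, that is $f(y)=f(x)$. Since $f$ is injective this forces $y=x$, whence $\nec x=\nec y\in F$ and so $F\in f(\nec x)$. This is the step that genuinely relies on dropping the upward closure: because $\nsystem_{A}(F)$ consists \emph{exactly} of the sets of the form $f(y)$ with $\nec y\in F$, membership of $f(x)$ pins down the generator up to the equality $f(y)=f(x)$, which injectivity converts into $y=x$. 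Note this argument no longer invokes monotonicity of $A$ anywhere, which is exactly what is required for a non-monotonic $A$.

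The main obstacle to watch is precisely this converse inclusion: one must resist importing the monotonicity argument of Theorem~\ref{extjt} and instead use the exact (non-upward-closed) description of $\nsystem_{A}(F)$ together with the injectivity of $f$ established from the Rasiowa-Sikorski lemma. Once that is in place, combining the two inclusions yields $f(\nec x)=\nec_{\bar J_{S}(A)}f(x)$, and together with the Boolean homomorphism, injectivity, and meet-preservation properties this shows $f$ is a monomorphism of modal algebras with the stated preservation of countable meets in $S$.
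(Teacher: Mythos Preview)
Your proposal is correct and follows essentially the same approach as the paper: the Boolean, injectivity, and meet-preservation parts are carried over verbatim from Theorem~\ref{extjt}, and the forward inclusion $f(\nec x)\subseteq\nec_{\bar J_{S}(A)}f(x)$ is immediate from the definition of $\nsystem_{A}$. For the converse the paper argues by contrapositive (if $\nec x\not\in F$ then for every $y$ with $\nec y\in F$ one has $x\neq y$, hence by Rasiowa--Sikorski $f(y)\neq f(x)$), which is just your injectivity step unpacked inline; your direct phrasing via ``$f(y)=f(x)\Rightarrow y=x$'' is in fact a slightly cleaner packaging of the same idea.
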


\begin{proof}
We only show the case for modal operator. 
For any $x\in A$, $f(\nec x)\subseteq \nec f(x)$, since 
\begin{align*}
F\in f(\nec x)
& \eq 
\nec x\in F\\
& \thn
f(x)\in\nsystem_{A}(F)\\
& \eq 
F\in\nec_{\bar{J}_{S}(A)} f(x). 
\end{align*}
We show converse. Suppose $\nec x\not\in F$. 
Take any $y\in A$ such that $\nec y\in F$. 
Then, $y\not\leq x$ or $x\not\leq x$. 
By Rasiowa-Sikorski lemma, there exists $G\in\qflt{S}(A)$ such that 
$y\in G$ and $x\not\in G$ or there exists $H\in\qflt{S}(A)$ such that 
$y\not\in H$ and $x\in H$, respectively.  
Hence, $f(y)\not=f(x)$. Therefore, $f(x)\not\in\nsystem(F)$. 
\end{proof}

From Theorem~\ref{nextjt}, the model existence theorem 
and the completeness theorem for non-monotonic predicate modal logics 
follow by the 
same arguments as Section~\ref{sec:mex}.  
To summarize both monotonic and non-monotonic cases, we have the following: 

\begin{theorem}
For any 
(monotonic and/or topped and/or cufi) predicate
modal logic $\logic{L}$, there exists 
a 
(monotonic and/or topped and/or cufi, respectively) 
neighborhood model $\nmodel{M}$ with constant domains which satisfies that 
\begin{equation*}
\phi\in\logic{L}\eq\nmodel{M}\vl\phi
\end{equation*}
for any closed formula $\phi$. 
\end{theorem}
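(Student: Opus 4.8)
The plan is to repeat the construction in the proof of Theorem~\ref{mdlex} almost verbatim, using the non-monotonic representation (Theorem~\ref{nextjt}) in place of Theorem~\ref{extjt} whenever $\logic{L}$ is not assumed to be monotonic, so that the two cases are handled by one uniform argument.

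First I would form the Lindenbaum algebra $\lind{L}$ of $\logic{L}$ and set $S=\{|\forall x\phi|\mid x\in\variable,\ \phi\in\Phi\}$, which is countable because the language is countable. The reason for this choice is the claim established immediately before Theorem~\ref{mdlex}: in $\lind{L}$ one has $|\forall x\phi|=\iand_{y\in\variable}|\phi[x:=y]|$, so every meet arising from a universal quantifier already belongs to $S$ and will therefore be preserved by the representation.

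Next, according to whether $\logic{L}$ is monotonic, I would invoke Theorem~\ref{extjt} (with $J_{S}(\lind{L})$) or Theorem~\ref{nextjt} (with $\bar{J}_{S}(\lind{L})$) to obtain a monomorphism of modal algebras $f\colon\lind{L}\yy K\circ J_{S}(\lind{L})$, respectively $f\colon\lind{L}\yy K\circ\bar{J}_{S}(\lind{L})$, which preserves every meet in $S$. On the underlying neighborhood frame $\langle C,\nsystem\rangle$ I would then define the model $\nmodel{M}=\langle C,\nsystem,\ndom,\nint\rangle$ exactly as in Theorem~\ref{mdlex}, taking $\ndom=\variable$, declaring $(x_{1},\ldots,x_{n})\in\nint(F,P)$ iff $|P(x_{1},\ldots,x_{n})|\in F$, and using the canonical assignment $\nass(x)=x$. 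The core of the proof is then the inductive identity $\nval{\nass}(\phi)=f(|\phi|)$ for every formula $\phi$: the Boolean connectives are immediate since $f$ is a Boolean homomorphism, the case $\phi=\nec\psi$ follows from the valuation clause for $\nec$ together with the fact that $f$ preserves $\nec$ (this is precisely where Theorem~\ref{extjt} or Theorem~\ref{nextjt} enters), and the case $\phi=\forall x\psi$ follows by combining the clause $\nval{\nass}(\forall x\psi)=\bigcap_{\nass'}\nval{\nass'}(\psi)$ with the induction hypothesis, the identity $|\forall x\psi|=\iand_{y}|\psi[x:=y]|$, and preservation of this meet. Granting the identity, for any closed $\phi$ we obtain $\phi\in\logic{L}$ iff $|\phi|=1$ iff $f(|\phi|)=C$ iff $\nmodel{M}\vl\phi$, which is the required equivalence. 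Finally, the model inherits whichever of the properties topped and cufi $\logic{L}$ satisfies, by Lemma~\ref{matonfr} in the monotonic case and by the corresponding lemma for $\bar{J}_{S}$ in the non-monotonic case.

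I expect the only genuinely delicate point to be the quantifier step of the inductive identity, since it is the sole place where the semantic intersection over the variant assignments must be matched with the algebraic meet in $S$; the remaining cases are a direct transcription of the verifications already carried out separately for the monotonic construction in Section~\ref{sec:mex} and for the non-monotonic construction via Theorem~\ref{nextjt}.
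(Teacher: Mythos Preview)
Your proposal is correct and follows essentially the same route as the paper: the paper simply states that the non-monotonic case follows from Theorem~\ref{nextjt} by the same argument as Section~\ref{sec:mex}, and the summary theorem merely unifies the monotonic and non-monotonic constructions. Your write-up is in fact more explicit than the paper's, spelling out the inductive identity $\nval{\nass}(\phi)=f(|\phi|)$ and singling out the quantifier step, whereas the paper leaves these details to the reader.
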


\begin{corollary} (Arl\'{o}-Costa and Pauit \cite{arlcst-pct06}). 
The least (monotonic and/or topped and/or cufi) predicate
modal logic 
is sound and complete with respect to the class of 
(monotonic and/or topped and/or cufi, respectively)
monotonic 
neighborhood frames with constant domains, respectively. 
\end{corollary}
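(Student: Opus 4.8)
The plan is to run the construction behind Theorem~\ref{mdlex} when $\logic{L}$ is monotonic, and its verbatim analogue based on Theorem~\ref{nextjt} when it is not, reading off the frame conditions from the relevant lemmas in each case. First I would form the Lindenbaum algebra $\lind{L}$ of $\logic{L}$ and put $S=\{|\forall x\phi|\mid x\in\variable,\ \phi\in\Phi\}$, exactly as in the proof of Theorem~\ref{mdlex}. The essential algebraic input is the claim established just before Theorem~\ref{mdlex}, namely $|\forall x\phi|=\iand_{y\in\variable}|\phi[x:=y]|$ in $\lind{L}$; this guarantees that every member of $S$ is realised as an honest infinite meet in $\lind{L}$, and these are precisely the meets that a representing monomorphism is asked to preserve.

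Next I would split on monotonicity. If $\logic{L}$ is monotonic, then $\lind{L}$ is a monotonic modal algebra, and Theorem~\ref{extjt} supplies a monomorphism $f\colon\lind{L}\yy K\circ J_{S}(\lind{L})$ preserving the meets in $S$; by Lemma~\ref{matonfr} the underlying frame is monotonic, and topped and/or cufi whenever $\logic{L}$ is topped and/or cufi. If $\logic{L}$ is not monotonic, I would instead invoke Theorem~\ref{nextjt} to obtain $f\colon\lind{L}\yy K\circ\bar{J}_{S}(\lind{L})$, the frame being topped and/or cufi whenever $\logic{L}$ is, by the non-monotonic counterpart of Lemma~\ref{matonfr}. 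In both branches I would erect the same model $\nmodel{M}$ over the chosen frame: constant domain $\ndom=\variable$, interpretation $(x_{1},\ldots,x_{n})\in\nint(F,P)\eq|P(x_{1},\ldots,x_{n})|\in F$, and canonical assignment $\nass(x)=x$.

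The heart of the argument is the identity $\nval{\nass}(\phi)=f(|\phi|)$ for every formula $\phi$, proved by induction on $\phi$. The Boolean cases are immediate because $f$ is a homomorphism of Boolean algebras, and the modal case reduces to $|\nec\phi|=\nec|\phi|$ together with the fact that $f$ preserves $\nec$, which is the content of Theorem~\ref{extjt} (resp.\ Theorem~\ref{nextjt}). Granting the identity, the chain $\phi\in\logic{L}\eq|\phi|=1\eq f(|\phi|)=C\eq\nval{\nass}(\phi)=C$ is forced by $f$ being a monomorphism, and for a closed $\phi$ the last equality is equivalent to $\nmodel{M}\vl\phi$, since the valuation of a closed formula is independent of the assignment.

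The step that needs genuine care, and which I expect to be the main obstacle, is the quantifier case. With $\ndom=\variable$ and $\nass(x)=x$, the $x$-variants $\nass'\in A_{x}$ correspond to the substitution instances, so that $\bigcap_{\nass'\in A_{x}}\nval{\nass'}(\phi)$ becomes $\bigcap_{y\in\variable}f(|\phi[x:=y]|)$; this collapses to $f\bigl(\iand_{y\in\variable}|\phi[x:=y]|\bigr)=f(|\forall x\phi|)$ precisely because $|\forall x\phi|\in S$ is computed as a genuine meet in $\lind{L}$ and $f$ preserves meets in $S$. Thus the two delicate points, that this particular meet lies in $S$ and that the $x$-variant assignments range exactly over the substitution instances, are both secured by the pre-proved claim $|\forall x\phi|=\iand_{y\in\variable}|\phi[x:=y]|$. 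Once the induction is complete, the transfer of the frame properties via Lemma~\ref{matonfr} and its non-monotonic counterpart yields the monotonic, topped, and cufi refinements simultaneously, which is all the statement asserts.
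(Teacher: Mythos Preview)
Your argument is a correct and careful re-derivation of the model existence theorem (the preceding Theorem in the paper), but that is not what this corollary calls for. The paper's proof is a single line: soundness follows from the leastness of the logic in question, and completeness follows immediately from the model existence theorem already established. You have redone the hard work behind that theorem rather than invoking it.

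More importantly, there is a genuine omission: you never address soundness. The statement asserts that the least (monotonic and/or topped and/or cufi) predicate modal logic is \emph{sound} and complete with respect to the corresponding class of neighborhood frames with constant domains. Your construction, however detailed, only shows that every non-theorem of $\logic{L}$ is refuted on some model in the class; it says nothing about why every theorem of $\logic{L}$ is valid on every frame in the class. The paper handles this by appealing to leastness: one checks that the axioms are valid and the rules preserve validity on the relevant class, so the smallest logic generated by them is contained in the set of validities. Your final sentence, ``which is all the statement asserts,'' is therefore mistaken---the statement asserts a two-way correspondence with a class of frames, not the existence of a single characteristic model.
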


\section{Infinitary modal logics}\label{sec:infinitary}

In this section, we apply the arguments in the previous two sections 
to infinitary modal logics.

The language we consider consists of the 
following symbols:
\begin{enumerate} 
\item
a countable set $\propvar$ of propositional variables;

\item
$\top$ and $\bot$;

\item logical connectives: 
$\iand$,  $\neg$;

\item
modal operator
$\nec$.
\end{enumerate}

The set $\Phi$ of formulas is 
the smallest set which satisfies: 
\begin{enumerate}
\item
$\propvar\subseteq\Phi$;

\item
$\top\in\Phi$ and $\bot\in\Phi$;

\item 
if $\gm$ is a countable subset of $\Phi$, 
then $\iand\gm\in\Phi$; 

\item 
if $\phi\in\Phi$ 
then $(\neg\phi)$ and 
$(\nec\phi)$  are in $\Phi$;
\end{enumerate}

The symbols $\supset$,  $\equiv$ and $\Diamond\phi$ 
are defined in the same way as Section~\ref{sec:mex}. 
For each countable set $\gm$ of $\Phi$
we write 
$\ior\gm$  for 
$\neg\iand_{\phi\in\gm}\neg\phi$. 

Let $\phi$ be any formula. 
The set $\sformula(\phi)$ of all subformulas of $\phi$ is defined inductively, 
as follows: 
\begin{enumerate}
\item
$\sformula(p)=\{p\}$,  
for all $p\in\propvar$; 

\item
$\sformula(\top)=\{\top\}$ and 
$\sformula(\bot)=\{\bot\}$;

\item 
$\sformula(\iand\gm)=\{\iand\gm\}\cup\bigcup_{\phi\in\gm}\sformula(\phi)$;

\item 
$\sformula(\nec\phi)=\{\nec\phi\}\cup\sformula(\phi)$. 
\end{enumerate}

A {\em neighborhood model} for infinitary modal logic is a triple
$\langle C,\nsystem,v\rangle$, 
where $\langle C,\nsystem\rangle$ is a neighborhood frame and 
$v$ is a mapping from $\propvar$  to $C$, which is called a {\em valuation}. 
For each valuation $v$, 
the domain $\propvar$ is extended to $\Phi$ in the following way: 
\begin{enumerate}
\item
$v(\top)=C$, 
$v(\bot)=\emptyset$; 
\item 
$v(\iand\gm)=\bigcap_{\phi\in\gm}v(\phi)$;
\item 
$v(\neg\phi)=
C\setminus v(\phi)$;
\item 
$c\in v(\nec\phi)$
$\eq$
$v(\phi)\in\nsystem(c)$. 
\end{enumerate}

Let $Z=\langle C,\nsystem\rangle$ be a neighborhood frame. 
For any neighborhood model $\nmodel{M}=\langle C,\nsystem,v\rangle$
for infinitary modal logic, 
any $c\in C$ and any formula $\phi$, 
we write 
$
\nmodel{M},c\vl\phi
$
if $c\in v(\phi)$. 
Other expressions 
$\nmodel{M}\vl\phi$, 
$Z\vl\phi$ and   
$C\vl\phi$ are 
defined in the same way as Section~\ref{sec:mex}.

A set $\logic{L}$ of formulas is called 
an {\em infinitary  modal logic}, if it 
contains classical propositional logic as a subset 
and closed under substitution, modus ponens 
and the following three inference rules: 
\begin{enumerate}
\item
if $\iand\gm\in\logic{L}$ then $\iand\gm\supset\phi\in\logic{L}$ for any 
$\phi\in\gm$;
\item
if $\psi\supset\phi\in\logic{L}$ for any $\phi\in\gm$ 
then $\psi\supset\iand\gm\in\logic{L}$;
\item
for any $\phi$ and $\psi$ in $\logic{L}$, 
if $\phi\equiv\psi\in\logic{L}$ then $\nec\phi\equiv\nec\psi\in\logic{L}$. 
\end{enumerate}
An infinitary  modal logic $\logic{L}$ is said to be 
{\em monotonic}, {\em topped} and {\em cufi}, 
if 
$\nec(p\land q)\supset\nec p\land\nec q\in\logic{L}$,
$\nec\top\in\logic{L}$
and 
$\nec p\land \nec q\supset\nec(p\land q)\in\logic{L}$,  respectively. 
An infinitary  modal logic $\logic{L}$ is said to be {\em normal} if it is
monotonic, topped and cufi. 
We write $\ilogic{K}$ for the least normal infinitary 
modal logic, which is the least infinitary extension of $\logic{K}$.

Let $\logic{L}$ be an infinitary modal logic. 
As we have 
conjunction of any countable sets of formulas, 
there exist uncountably many elements of the 
shape $|\iand\gm|$
in the Lindenbaum algebra $\lind{L}$ of $\logic{L}$. 
On the other hand, 
the monomorphism given in Theorem~\ref{extjt} and \ref{nextjt} 
can preserve only countably many infinitary meets.  
However, we have a restricted version of the model existence theorem, 
as follows: 

\begin{theorem}\label{imdlex}
Let $U$ be a countable set of formulas. 
For any infinitary
(monotonic and/or topped and/or cufi) 
modal logic $\logic{L}$, there exists 
a 
(monotonic and/or topped and/or cufi, respectively) 
neighborhood model $\nmodel{M}$ which satisfies that 
\begin{equation*}
\phi\in\logic{L}\eq\nmodel{M}\vl\phi 
\end{equation*}
for any $\phi\in U$. 
\end{theorem}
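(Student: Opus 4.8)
The plan is to follow the proof of Theorem~\ref{mdlex} as closely as possible, building the required model from the Lindenbaum algebra $\lind{L}$ of $\logic{L}$ via the representation theorem of Section~\ref{sec:rs}. The one genuine difficulty, already flagged above, is that the set $\Phi$ of infinitary formulas is uncountable, so the collection of meets $|\iand\gm|$ arising in $\lind{L}$ is uncountable as well; since the monomorphism of Theorem~\ref{extjt} (resp.\ Theorem~\ref{nextjt}) preserves only countably many infinitary meets, I cannot hope to realise \emph{every} formula correctly at once. The point of fixing a countable set $U$ is precisely that then only countably many meets need to be preserved, and the whole argument turns on isolating them.

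First I would record the Lindenbaum identity $|\iand\gm|=\iand_{\psi\in\gm}|\psi|$ in $\lind{L}$, for every countable $\gm\subseteq\Phi$. This is the infinitary analogue of the quantifier claim used in Section~\ref{sec:mex}: inference rule~(1) gives $\iand\gm\supset\phi\in\logic{L}$ for each $\phi\in\gm$, so $|\iand\gm|$ is a lower bound of $\{|\psi|\mid\psi\in\gm\}$, while inference rule~(2) shows that any other lower bound $|\psi|$ satisfies $\psi\supset\iand\gm\in\logic{L}$, so $|\iand\gm|$ is in fact the greatest lower bound. Next I would extract the countable index set. Put $T=\bigcup_{\phi\in U}\sformula(\phi)$. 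By induction along the construction of formulas one checks that $\sformula(\phi)$ is countable for every single $\phi$ (the conjunction case $\sformula(\iand\gm)=\{\iand\gm\}\cup\bigcup_{\psi\in\gm}\sformula(\psi)$ is a countable union of countable sets), so $T$ is countable, and $T$ is closed under subformulas. I then set
$$
S=\left\{\,\{|\psi|\mid\psi\in\gm\}\mid \iand\gm\in T\,\right\}\subseteq\power(\lind{L}),
$$
which is countable because $T$ is.

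Applying Theorem~\ref{nextjt} to $\lind{L}$ and this $S$ (or Theorem~\ref{extjt} when $\logic{L}$ is monotonic, in order to keep the frame monotonic) yields a monomorphism $f\colon\lind{L}\yy K\circ\bar{J}_{S}(\lind{L})$ of modal algebras with $f(\iand X)=\iand f[X]$ for every $X\in S$ such that $\iand X\in\lind{L}$. I would then define the neighborhood model $\nmodel{M}=\langle C,\nsystem,v\rangle$ by $\langle C,\nsystem\rangle=\bar{J}_{S}(\lind{L})$ and $v(p)=f(|p|)$ for $p\in\propvar$, and prove $v(\phi)=f(|\phi|)$ for all $\phi\in T$ by induction along the subformula relation (legitimate since $T$ is subformula-closed). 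The atomic, $\neg$ and $\nec$ cases use only that $f$ is a homomorphism of modal algebras; the conjunction case is where $S$ was engineered, since for $\iand\gm\in T$ we have $\{|\psi|\mid\psi\in\gm\}\in S$, whence $v(\iand\gm)=\bigcap_{\psi\in\gm}f(|\psi|)=\iand f[\{|\psi|\mid\psi\in\gm\}]=f(|\iand\gm|)$ by the meet identity and the preservation property of $f$.

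For $\phi\in U\subseteq T$ this gives $v(\phi)=f(|\phi|)$, and since $f$ is a monomorphism, $\phi\in\logic{L}$ iff $|\phi|=1$ iff $f(|\phi|)=C$ iff $\nmodel{M}\vl\phi$, which is the asserted equivalence. The monotonic/topped/cufi clauses then follow because these properties of $\logic{L}$ make $\lind{L}$ monotonic/topped/cufi, and the lemmas defining $\bar{J}_{S}(\lind{L})$ (resp.\ $J_{S}(\lind{L})$) transfer them to the frame. I expect the only real obstacle to be the bookkeeping that makes $S$ countable, namely the observation that a countable set of formulas mentions only countably many conjunctions among its subformulas; once $T$ and $S$ are set up, everything else is a routine transcription of the proof of Theorem~\ref{mdlex}.
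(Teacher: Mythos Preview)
Your argument is correct, but it differs from the paper's in one organisational choice: the paper first passes to the countable \emph{subalgebra} $A_{U}$ of $\lind{L}$ generated by $\{|\psi|\mid\psi\in\sformula(\phi),\ \phi\in U\}$, defines $S$ inside $\power(A_{U})$, and builds the frame as $J_{S}(A_{U})$; you instead work in the full Lindenbaum algebra $\lind{L}$, take $S\subseteq\power(\lind{L})$ indexed by the conjunctions appearing in $T=\bigcup_{\phi\in U}\sformula(\phi)$, and build the frame as $\bar{J}_{S}(\lind{L})$ (or $J_{S}(\lind{L})$ in the monotonic case). Both routes are legitimate, since Theorems~\ref{extjt} and~\ref{nextjt} require only that $S$ be countable, not the ambient algebra. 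Your version is arguably tidier: the countability of $S$ is immediate from the countability of $T$, there is no need to verify that the generated subalgebra is countable, and the valuation $v(p)=f(|p|)$ is automatically well defined for every $p\in\propvar$, whereas in the subalgebra approach one must say what $v(p)$ means when $|p|\notin A_{U}$. The paper's approach, on the other hand, yields a frame built over Q-filters of a countable algebra, hence a more explicitly ``small'' model. The remaining ingredients --- the Lindenbaum identity $|\iand\gm|=\iand_{\psi\in\gm}|\psi|$, the induction establishing $v(\phi)=f(|\phi|)$ on the subformula-closed set $T$, and the transfer of monotonic/topped/cufi via Lemma~\ref{matonfr} --- are the same in both.
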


\begin{proof}
We only show the case that $\logic{L}$ is monotonic. 
Let $A_{U}$ be the subalgebra of $\lind{L}$ generated by 
the set 
$$
\bigcup_{\phi\in U}
\left\{
|\psi|
\mid 
\psi\in\sformula(\phi)
\right\}. 
$$
Since $U$ is countable, so is  
$A_{U}$. 
Define a subset $S$ of $\power(A_{U})$ by 
$$
S=
\left\{
\left\{
|\phi|\mid \phi\in\gm
\right\}
\mid
\left|\iand\gm\right|\in A_{U}
\right\}.
$$
Then, $S$ is also countable. 
We claim that for any $\left|\iand\gm\right|\in A_{U}$, 
$$
\left|\iand\gm\right|=
\iand_{\phi\in\gm} \left|\phi\right|. 
$$
For any $\phi\in\gm$, $|\iand\gm|\leq|\phi|$ holds, 
since $\iand\gm\supset\phi\in\logic{L}$ by definition of infinitary modal logic. 
Hence, $|\iand\gm|$ is a lower bound of the 
set $\left\{|\phi|\mid \phi\in\gm\right\}$. 
Suppose that $|\psi|\in A_{U}$ is a lower bound of this set. 
Then, $\psi\supset\phi\in\logic{L}$, for any $\phi\in\gm$.  
By definition of infinitary  modal logic, $\psi\supset\iand\gm\in\logic{L}$. 
Therefore, $|\psi|\leq |\iand\gm|$. 
This complete the proof of the claim. 
Define a neighborhood model $\nmodel{M}=\langle C,\nsystem,v\rangle$
by $\langle C,\nsystem\rangle=J_{S}(A_{U})$ and
$$
F\in v(p)\eq |p|\in F. 
$$
By the same argument as in the proof of Theorem~\ref{mdlex},  
we obtain that 
\begin{equation*}
\phi\in\logic{L}\eq\nmodel{M}\vl\phi 
\end{equation*}
for any $|\phi|\in A_{U}$, and 
that $\nmodel{M}$ is monotonic. 
\end{proof}

As a corollary, we obtain an extension of the 
completeness theorem for $\ilogic{K}$ given by Minari \cite{mnr16}, 
as follows:

\begin{corollary} 
The least (monotonic and/or topped and/or cufi) infinitary 
modal logic 
is sound and complete with respect to the class of 
(monotonic and/or topped and/or cufi, respectively)
monotonic 
neighborhood frames, respectively. 
\end{corollary}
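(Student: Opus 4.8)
The plan is to mirror the argument used for the predicate case: soundness will follow from the minimality of the logic, and completeness will be an immediate consequence of the model existence Theorem~\ref{imdlex}. Throughout, fix one of the combinations of the properties monotonic, topped and cufi, write $\logic{L}_{0}$ for the least infinitary modal logic having those properties, and let $C$ denote the class of neighborhood frames having the corresponding properties. The two inclusions $\logic{L}_{0}\subseteq\{\phi\mid C\vl\phi\}$ and $\{\phi\mid C\vl\phi\}\subseteq\logic{L}_{0}$ are soundness and completeness, respectively.

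For soundness I would show that the set $\mathrm{Val}(C)=\{\phi\mid C\vl\phi\}$ is itself an infinitary modal logic with the chosen properties; since $\logic{L}_{0}$ is the least such logic, this gives $\logic{L}_{0}\subseteq\mathrm{Val}(C)$, which is exactly soundness. The verification is routine and proceeds clause by clause. The Boolean connectives are interpreted pointwise and classically, so $\mathrm{Val}(C)$ contains classical propositional logic and is closed under modus ponens and substitution (substitution corresponds to composing with a valuation, which preserves validity). For the infinitary rules one uses only the clause $v(\iand\gm)=\bigcap_{\phi\in\gm}v(\phi)$: the formula $\iand\gm\supset\phi$ is valid on every frame because $v(\iand\gm)\subseteq v(\phi)$ whenever $\phi\in\gm$, and if every $\psi\supset\phi$ with $\phi\in\gm$ is valid on $C$, then $v(\psi)\subseteq\bigcap_{\phi\in\gm}v(\phi)=v(\iand\gm)$ on each model, so $\psi\supset\iand\gm$ is valid. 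The congruence rule for $\nec$ is validated because $v(\phi)=v(\psi)$ forces $v(\nec\phi)=v(\nec\psi)$ directly from the neighborhood clause $c\in v(\nec\phi)\eq v(\phi)\in\nsystem(c)$. Finally, the defining axioms $\nec(p\land q)\supset\nec p\land\nec q$, $\nec\top$ and $\nec p\land\nec q\supset\nec(p\land q)$ are valid on monotonic, topped and cufi frames respectively, by the same computation as in Lemma~\ref{nfrtoma}.

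For completeness I would argue contrapositively. Suppose $\phi\not\in\logic{L}_{0}$. Applying Theorem~\ref{imdlex} with the countable set $U=\{\phi\}$ yields a neighborhood model $\nmodel{M}$ with the chosen properties such that $\phi\in\logic{L}_{0}\eq\nmodel{M}\vl\phi$. Since $\phi\not\in\logic{L}_{0}$, we obtain $\nmodel{M}\not\vl\phi$, and the underlying frame $Z$ of $\nmodel{M}$ belongs to $C$ and satisfies $Z\not\vl\phi$, whence $C\not\vl\phi$. Taking the contrapositive gives $C\vl\phi\thn\phi\in\logic{L}_{0}$, which is completeness.

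The only genuine content has already been absorbed into Theorem~\ref{imdlex}, so I do not expect a serious obstacle here; the single point that needs care is that Theorem~\ref{imdlex} delivers satisfaction in a particular model $\nmodel{M}$ rather than frame validity. This is exactly what the refutation needs, however, since $\nmodel{M}\not\vl\phi$ immediately yields $Z\not\vl\phi$ for the underlying frame $Z$, and frame validity $C\vl\phi$ quantifies over all valuations and so is the weaker notion on the soundness side.
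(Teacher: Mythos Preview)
Your proposal is correct and follows essentially the same approach as the paper: soundness by leastness of $\logic{L}_{0}$ and completeness by applying Theorem~\ref{imdlex} to the singleton $U=\{\phi\}$ for an unprovable $\phi$. The only difference is cosmetic: where the paper simply invokes ``the leastness of $\ilogic{K}$'' for soundness, you spell out explicitly why $\mathrm{Val}(C)$ is itself an infinitary modal logic with the requisite closure properties, which is exactly the content that justifies that one-line appeal.
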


\begin{proof}
We only show that the logic $\ilogic{K}$ is sound and complete with 
respect to the class $C$ of monotonic topped cufi neighborhood frames. 
Suppose $\phi\in\ilogic{K}$. 
Then, $C\vl\phi$, 
by the leastness of $\ilogic{K}$. 
Suppose $\phi\not\in\ilogic{K}$. 
Let $U=\{\phi\}$. 
By Theorem~\ref{imdlex}, 
there exists a 
monotonic topped cufi
neighborhood model $\nmodel{M}$ such that $\nmodel{M}\not\vl\phi$. 
Hence, $C\not\vl\phi$. 
\end{proof}

It is known that the formula 
$$
\infb=\ibarcan, 
$$
that is, an infinitary translation of the Barcan formula $\predb$, 
is valid in every Kripke model.  However, $\infb$ is not a member of $\ilogic{K}$. 
Hence, $\ilogic{K}$ is Kripke incomplete \cite{tnkpred, mnr16}.  
In contrast, $\infb$ is refuted in a neighborhood frame
\cite{mnr16}. 
The counter example in \cite{mnr16} is given by a multi-relational Kripke 
frame, but it is easy to obtain an equivalent neighborhood frame from it.

\bibliographystyle{plain}
\bibliography{myref.sjis}
\end{document}